\newtheorem*{theorem*}{Theorem}
\newtheorem*{corollary*}{Corollary}
\newtheorem*{conj*}{Conjecture}
\newtheorem{proposition}{Proposition}
\newtheorem*{prop*}{Proposition}
\newtheorem*{assumption*}{Assumption}
\theoremstyle{definition}
\newtheorem*{definition*}{Definition}
\theoremstyle{remark}
\newtheorem{remark}{Remark}
\newtheorem*{notation*}{Notation}
\newtheorem*{algorithm*}{Algorithm}
\newtheorem*{example*}{Example}
\title{On triangular biregular degree sequences}
\author{Benjamin Egan}
\address{Department of Mathematical and Physical Sciences, La Trobe University, VIC 3086, Australia}
\email{20739919@students.edu.au}
\thanks{The first named author thanks the Department of Mathematical and Physical Sciences, La Trobe University, for supporting this project.}
\author{Yuri Nikolayevsky}
\address{Department of Mathematical and Physical Sciences, La Trobe University, VIC 3086, Australia}
\email{Y.Nikolayevsky@latrobe.edu.au}
\subjclass[2020]{05C07}
\keywords{graphical degree sequence, triangular graph}
\begin{document}

\begin{abstract}
A simple graph is called triangular if every edge of it belongs to a triangle. We conjecture that any graphical degree sequence all terms of which are greater than or equal to $4$ has a triangular realisation, and establish this conjecture for a class of biregular graphical degree sequences.
\end{abstract}

\maketitle

\section{Introduction}
\label{s:intro}

All graphs in this paper are finite and simple (no loops, no multiple edges). Given a graph $G$, we denote $E(G)$ and $V(G)$ its edge set and its vertex set, respectively. We call a sequence $d=(d_1, d_2, \dots, d_n)$ of nonnegative integers, where $n =|V(G)|$, the degree sequence of the graph $G$, if there is a labelling of the vertices $\{v_1, v_2, \dots, v_n\} = V(G)$ such that $\deg(v_i)=d_i$, for $i=1, 2, \dots, n$. Unless otherwise is stated explicitly, we will always order the terms of a degree sequence in a non-increasing order. A sequence of $p \ge 0$ repeated terms $a \ge 0$ is denoted $a^p$. A sequence $d$ of non-negative integers is called \emph{graphic} if there exists a graph $G$ whose degree sequence is $d$; any graph with this property is called a \emph{realisation} of $d$. There is a vast literature on graphical degree sequences, starting from the pioneering paper of Erd\H{o}s-Gallai \cite{EG}.

In this paper, we are interested in triangular realisations of degree sequences. We call a graph \emph{triangular}, if every edge of it belongs to a $3$-cycle (for results on triangular graphs in the context of degree sequences, see \cite{PR}; for the other extremal case, the degree sequences of triangle-free graphs we refer the reader to~\cite{EFS}). It is easy to see that regular graphical sequences $(1^n)$ have no triangular realisation, and the sequences $(2^n)$ and $(3^n)$ admit a triangular realisation if and only if $n$ is divisible by $3$ and by $4$, respectively, as disjoint unions of $K_3$'s and $K_4$'s, respectively (we do not require realisations to be connected). We found no counterexamples to the following conjecture.

\begin{conj*}
Any graphical sequence all of whose terms are greater than or equal to $4$ admits a triangular realisation. 
\end{conj*}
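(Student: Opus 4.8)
The plan is to prove the conjecture by a \emph{defect-reduction} argument carried out on the whole space of realisations of $d$, rather than by constructing one realisation directly. Call an edge of a realisation $G$ \emph{good} if it lies in a triangle and \emph{bad} otherwise, let $b(G)$ denote the number of bad edges, and note that $G$ is triangular exactly when $b(G)=0$. By the classical fact that any two realisations of a fixed degree sequence are connected by a finite chain of $2$-switches (delete edges $\{a,a'\},\{c,c'\}$ and insert the non-edges $\{a,c\},\{a',c'\}$), it suffices to start from an arbitrary realisation and show that, as long as $b(G)>0$, some sequence of switches strictly decreases $b$; a realisation minimising $b$ is then forced to be triangular. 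Before the main step I would record two simplifying reductions. If $d_1=n-1$ the first vertex is adjacent to all others in every realisation, and since every other degree is at least $4\ge 2$ one checks at once that every edge lies in a triangle; iterating this observation lets one cap the maximum degree, so the substantive case is that of moderate maximum degree. Secondly, when $d$ is biregular the theorem of this paper already produces a triangular realisation, which I would use as a convenient starting point and as a sanity check on the reduction.

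The heart of the argument is the following \textbf{Defect Lemma}: if a realisation $G$ with all degrees at least $4$ has a bad edge, then finitely many $2$-switches preserving $d$ produce a realisation $G'$ with $b(G')<b(G)$. To prove it, fix a bad edge $uv$. Since $u$ and $v$ have no common neighbour and each has degree at least $4$, the vertex $u$ has at least three neighbours $a_1,a_2,a_3$ and $v$ has at least three neighbours $c_1,c_2,c_3$, all six distinct from one another and from $u,v$. The natural move is to create the common neighbour that $uv$ lacks: choosing a neighbour $a$ of $u$ (so $a\not\sim v$, as otherwise $a$ would be a common neighbour) and a $2$-switch that deletes an edge $\{a,a'\}$ at $a$ together with an edge $\{v,c\}$ at $v$ and inserts $\{a,v\}$ and $\{a',c\}$ makes $a$ adjacent to both $u$ and $v$, so $uv$ becomes good. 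The freedom in choosing $a,a',c$ among three independent neighbourhoods, each of size at least three, is what one exploits to guarantee a \emph{net} gain.

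The hard part --- and the reason the present paper proves only the biregular case --- is the global bookkeeping: the two deleted edges may each have been the unique triangle-support of several other edges, so deleting them can turn good edges bad, and the newly inserted edge $\{a',c\}$ may itself be bad. In an adversarial local configuration these losses could cancel the single guaranteed gain at $uv$. To defeat this I would (i) first show that a bad edge forces a rich neighbourhood, essentially two disjoint ``triangles-in-waiting'', giving enough distinct candidates $a,a',c$ that some choice avoids orphaning any good edge; (ii) design the switch to \emph{recycle} an existing triangle, so that the deleted edges retain triangle-support through edges that are untouched; and, failing a single clean switch, (iii) replace the crude count $b(G)$ by a lexicographic potential (bad edges first, then a secondary quantity such as the number of edges lying in exactly one triangle) so that a switch which keeps $b$ fixed still makes measurable progress. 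Carrying out this case analysis uniformly over every neighbourhood type of a bad edge, while controlling the degrees, is the principal obstacle I anticipate.
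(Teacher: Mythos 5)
You have not given a proof, and it is important to be clear that the paper does not either: the statement you address is the paper's open \emph{Conjecture}, of which the paper establishes only the biregular case (via Havel--Hakimi reductions and explicit circulant constructions carrying top monotone cycles). Your entire argument rests on the ``Defect Lemma'' --- that from any realisation with a bad edge some finite sequence of $2$-switches strictly decreases the number of bad edges (or a lexicographic refinement of it) --- and this lemma is never proved. Items (i)--(iii) are declared intentions, not arguments, and you yourself name the uniform case analysis as ``the principal obstacle.'' Since the minimum-defect argument yields the conjecture \emph{only if} the lemma holds, the proposal is a programme with its central step missing, not a proof.

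The gap is moreover not a routine one to fill, for a concrete structural reason your sketch does not engage with: the analogous lemma is \emph{false} for minimum degree $2$ and $3$. The sequences $(2^n)$ with $3 \nmid n$ and $(3^n)$ with $4 \nmid n$ are graphical but, as noted in the paper's introduction, admit no triangular realisation at all; a defect-minimising realisation of such a sequence has bad edges, yet no chain of $2$-switches can reach defect zero. Hence the obstruction to triangularity at low degree is global, and any correct proof of your lemma must use the threshold $4$ in an essential way that goes beyond your local counting (``three spare neighbours at each endpoint of a bad edge''), since qualitatively the same local surplus exists in the $3$-regular counterexamples. Nothing in the sketch identifies where degree $\ge 4$ defeats this global obstruction; controlling the side effects you correctly flag (deleted edges orphaning other edges' unique triangles, the inserted edge $\{a',c\}$ being bad, and the requirement that $\{a',c\}$ be a non-edge in the first place) is exactly the unsolved content of the conjecture. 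Two smaller points: your first reduction is sound only in the single case $d_1=n-1$ (where indeed every realisation is triangular) but does not ``iterate'' --- for $d_1\le n-2$ it gives nothing, so it does not cap the maximum degree; and using the paper's biregular Theorem as a ``starting point'' buys nothing for your induction, since the defect lemma must handle arbitrary realisations anyway.
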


We establish the conjecture for regular and biregular sequences (the sequences whose terms take no more than two values):

\begin{theorem*}
  Any graphical sequence of the form $(a^p, b^q)$, where $a>b\ge 4$ and $p \ge 0$, $q > 0$, admits a triangular realisation.
\end{theorem*}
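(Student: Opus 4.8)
The plan is to assume throughout that $(a^p, b^q)$ is graphic and to produce a triangular realisation from explicit building blocks. The graphicality hypothesis amounts to the parity condition that $pa + qb$ be even together with the Erd\H{o}s--Gallai inequalities, which for a two-valued sequence collapse to the requirement that $n := p + q$ be large enough --- essentially $a \le n - 1$ together with one packing bound on the top $p$ terms. Two elementary facts will be the source of all triangles, and I would isolate them as a preliminary lemma. First, a complete graph $K_m$ with $m \ge 3$ is triangular. Second, and more flexibly, if $G_1$ and $G_2$ are nonempty graphs each with no isolated vertex, then their join $G_1 * G_2$ is triangular: an internal edge of $G_i$ closes a triangle through any vertex of the other factor, while a cross edge $uv$ with $u \in G_1$ closes one through a neighbour of $u$ inside $G_1$.

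Next I would dispose of the regular case $p = 0$, that is $(b^q)$ with $b \ge 4$. When $b$ is even, the $(b/2)$-th power of a cycle, namely the circulant $C_q(1, 2, \dots, b/2)$, is $b$-regular and triangular, since consecutive vertices share neighbours. When $b$ is odd, graphicality forces $q$ even; here the naive remedy of adding the antipodal perfect matching to $C_q(1, \dots, (b-1)/2)$ fails, because for large $q$ the matching edges have no common neighbour and so lie in no triangle. I would instead build $(b^q)$ from disjoint copies of $K_{b+1}$ when $(b+1) \mid q$, and otherwise from several copies of $K_{b+1}$ together with one $b$-regular triangular gadget on the residual vertices; the hypothesis $b \ge 4$ leaves enough room to construct such a gadget in each residue class.

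For the biregular case I would set $c = a - b \ge 1$ and realise the sequence as a \emph{backbone plus augmentation}: a $b$-regular triangular graph $R$ on all $n$ vertices, supplied by the regular case, onto which I superimpose a $c$-regular graph $F$ supported on the $p$ vertices destined to have degree $a$, chosen edge-disjoint from $R$ and so that every edge of $F$ has a common neighbour in $R$. Triangularity of the new edges is then automatic once the two high-degree endpoints already share a backbone neighbour, which is arrangeable when $p$ is not too small. The dense regime, where $a$ is close to $n - 1$, I would treat directly through the join and clique blocks above, which handle the densest sequences cleanly. Splitting the parameter range in this way reduces the theorem to finitely many residual configurations.

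The main obstacle, I expect, is twofold and concentrated in those residual configurations. First there are parity and divisibility obstructions: a $c$-regular augmentation on $p$ vertices exists only when $cp$ is even, and when it does not --- or when the backbone itself cannot be made exactly $b$-regular on $n$ vertices --- the degree deficit must be redistributed without disturbing the already-fixed degrees, forcing a delicate local redesign. Second, and more fundamentally, triangularity is hardest to secure in the \emph{sparse} regime of small $b$, down to $b = 4$, with $n$ large, where vertices have few neighbours and a generic edge has no common neighbour at all; one cannot then rely on an abundance of triangles and must keep the structure locally clustered, which is exactly why the clique-based gadgets, rather than spread-out circulants, do the real work. I anticipate the bulk of the argument to be a finite but careful case analysis, organised by the residues of $p$, $q$, and $n$ modulo the relevant moduli, checking in each case that the chosen gadgets assemble into a simple graph with the prescribed degree sequence in which every edge lies in a triangle.
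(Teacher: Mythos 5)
There is a genuine gap, and it is structural rather than a matter of missing detail. Your biregular construction superimposes a $c$-regular augmentation $F$, $c=a-b$, on the $p$ high-degree vertices of a $b$-regular triangular backbone. A $c$-regular graph on $p$ vertices exists only when $c\le p-1$, so the decomposition is impossible for every graphical sequence with $a-b\ge p$. This is not a finite residual set and it is not the dense regime: it contains, for example, $(6^1,4^q)$ for all $q\ge 6$ and $(7^2,5^q)$ for all admissible even $q$, and more generally infinitely many sequences with small $p$ and $a$ far from $p+q-1$, so neither your join/clique blocks nor a finite case analysis can absorb it. The same is true of the parity obstruction you flag but do not resolve: whenever $b(p+q)$ is odd (e.g.\ $(6^3,5^{2k})$ for every $k\ge2$), no $b$-regular backbone on all $p+q$ vertices exists, and since $cp\equiv b(p+q)\pmod 2$ the augmentation fails simultaneously, so the scheme has no starting point on an infinite family. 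In both situations the excess degree $a-b$ must be carried by \emph{cross} edges from the high-degree vertices into the low-degree part, which forces the low-degree part to be realised with degrees \emph{below} $b$ and then topped up; regularity of the two layers, which your whole plan rests on, cannot be maintained. A smaller, fixable issue: in the regular case with $b$ odd you assert the existence of $b$-regular triangular gadgets on the residual vertex classes; for residual size $2b$ this needs an actual construction (a generic $b$-regular graph on $2b$ vertices is not triangular), though one exists.

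For comparison, the paper's proof is organised exactly around making cross edges safe and degree-balanced. It first realises regular and ``width-one'' sequences $((c+1)^{p_1},c^{p_2})$ by circulants (for odd degree the connection set is $\{\pm1\}$ together with near-antipodal differences, which is triangular since $l=1+(l-1)$ --- this is how it evades the failure of the naive antipodal matching you correctly identified), and equips these realisations with a \emph{top monotone cycle}: a cycle passing consecutively through the vertices of largest degree. Cross edges are then always attached to consecutive vertices along this cycle, so each new edge instantly closes a triangle, and attaching them starting where the degrees are smallest keeps the realised degrees within $1$ of each other. With this device, the case $p\ge b$ is handled by running the Havel--Hakimi reduction down to a width-one sequence and reversing it, and the case $b\ge p+4$ by joining $K_p$ to a triangular realisation of a reduced sequence $((c+1)^{q_1},c^{q_2})$ on the low-degree side; the boundary cases $b-p\in\{1,2,3\}$ are settled by a minimal-counterexample argument with small attached cliques, plus a handful of explicit exceptional graphs --- precisely the sequences, such as $(6^1,4^q)$ and $(7^2,5^q)$, on which your decomposition breaks.
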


The author would like to express their deepest gratitude to Grant Cairns from conversation with whom came the Conjecture, and for his many useful comments and suggestions.


\section{Preliminaries}
\label{s:2v}

It is well known that a constant sequence $(a^n)$, with $a,n > 0$, is graphical if and only if $na$ is even and $a \le n-1$.

The Erd\H{o}s-Gallai conditions for graphicality of a biregular sequence $(a^p,b^q)$, with $a>b > 0$ and $p,q >0$, take especially nice form (\cite{TV} or explicitly in \cite[Theorem~5]{CMN}): such a sequence is graphical if and only if $ap+bq$ is even and the following inequalities are satisfied:
\begin{equation}\label{eq:EG2}
  a \le p+q-1,\qquad ap \le p(p-1)+bq.
\end{equation}
It is not hard to see that when $b \ge p$ or when $b=a-1$, the second inequality in \eqref{eq:EG2} follows from the first one.

We will systematically use the \emph{Havel-Hakimi reduction} \cite{Hav, Hak}: given a degree sequence $d=(d_1, d_2, \dots, d_n)$ and a number $1 \le k \le n$, we form the degree sequence $d'$ by removing the term $d_k$ from $d$ and then reducing the first $k$ of the remaining terms by $1$. The resulting sequence $d'$ is graphical if and only if $d$ is graphical, and moreover, if it is, there exists a realisation of $d$ in which a vertex of degree $d_k$ is connected to the vertices having the top $k$ degrees in $d$, not counting itself \cite[Theorem~2.1]{KW}.

\section{Proof of the Theorem}
\label{s:proof}

The proof goes as follows. We first establish the Theorem for regular sequences (Proposition~\ref{p:sb1}) by explicitly constructing the corresponding triangular realisations as certain circulant graphs. In the same proposition, we construct triangular realisations for graphical sequences whose terms differ by no more than $1$. To be able to proceed to the general case, we need some extra structure for these realisations, namely we require that they contain so called top monotone cycles (defined below). We then consider arbitrary biregular graphical sequences $(a^p,b^q)$. By repeated use of the Havel-Hakimi reduction, we reduce such a sequence to a sequence whose terms differ by no more than $1$, take its triangular realisation provided by Proposition~\ref{p:sb1} and then running the Havel-Hakimi reduction in reverse, re-attach new vertices to the vertices of the top monotone cycle, ``in the correct order". The proof splits into two cases: in Proposition~\ref{p:pgeb} we work with the case $p \ge b$, and in Proposition~\ref{p:pltb}, with the case $p < b$.

Throughout the proof, we call a cycle (respectively, a path) of a graph $G$ a subset of $G$ homeomorphic to a circle (respectively, to a closed interval or a single point).

\medskip

\subsection{Regular sequences and sequences of width 1}
\label{ss:const}

In this section we consider graphical sequences which are either regular or whose maximal and minimal terms differ by one.

Provided all terms of such a sequence are at least $4$, we construct a triangular realisation with an additional property which we will use in the next section.

Given a graph $G$ with the degree sequence $d=(d_1, d_2, \dots, d_n)$ (recall that the terms of $d$ are ordered in a non-increasing order) and a number $3 \le m \le n$, we call a cycle $C_m \subset G$ a \emph{top monotone $m$-cycle}, if we can label its vertices $v_1, v_2, \dots, v_m$ in the consecutive order in such a way that $\deg(v_i)=d_i$ for $i=1, 2, \dots, m$.

\begin{proposition} \label{p:sb1}
  Let $d=((b+1)^p, b^q)$ be a graphical sequence such that $q > 0, \, p \ge 0$ and $b \ge 4$. Then the sequence $d$ has a triangular realisation. Moreover, for any $m \ge \max(3, p), \, m \le p+q$, with the exception of the case when $(p,q,m)=(b,2,b+2)$ \emph{(}which corresponds to the complete graph $K_{b+2}$ minus an edge\emph{)}, such triangular realisation $G$ can be chosen to contain a top monotone $m$-cycle.
\end{proposition}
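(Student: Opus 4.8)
The plan is to realise $d=((b+1)^p,b^q)$ on the cyclic vertex set $\mathbb{Z}_n$, where $n=p+q$, as a consecutive circulant decorated by a small set of controlled chords, and then to read the top monotone $m$-cycle off the cyclic structure. First I would note that, since the larger degree here is exactly one more than the smaller, the second inequality in \eqref{eq:EG2} is automatic, so graphicality of $d$ is equivalent to the parity condition on $(b+1)p+bq=bn+p$ together with $p+q\ge b+2$; in particular $n\ge b+2$, which leaves enough room for the constructions below. Writing $k=\lfloor b/2\rfloor\ge 2$ (using $b\ge 4$), the skeleton is the $2k$-regular circulant with connection set $\{1,2,\dots,k\}$. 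The triangularity observation I would use throughout is elementary: any edge $\{i,i+\ell\}$ with $2\le \ell\le k$ lies in the triangle $\{i,i+1,i+\ell\}$, a unit edge $\{i,i+1\}$ lies in $\{i,i+1,i+2\}$, and a \emph{chord} of length $k+1$ lies in $\{i,i+1,i+k+1\}$; hence as long as every added edge has length at most $k+1$ (and so length exactly $k+1$, to avoid coinciding with a skeleton edge), the resulting graph is triangular.

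With the triangular skeleton fixed, the next step is to correct the degrees. The skeleton is $2k$-regular, so I must superimpose an edge-disjoint graph $H$, all of whose edges have length $k+1$, whose degree contribution at each vertex equals the deficiency $d_i-2k$. When $b=2k$ is even this deficiency is $1$ at the $p$ designated high-degree vertices and $0$ elsewhere, so $H$ is a perfect matching on the high vertices; the parity condition forces $p$ even here, which is exactly what a matching requires. When $b=2k+1$ is odd the deficiency is $2$ at the high vertices and $1$ at the low vertices, so $H$ is a disjoint union of paths and cycles in which the high vertices are interior and the low vertices are endpoints; here parity forces $q$ even, matching the requirement that the number of path-endpoints be even. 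In both cases the candidates for $H$ live inside the length-$(k+1)$ circulant $C_n(k+1)$, a disjoint union of $\gcd(n,k+1)$ cycles, and the task is to select within it a subgraph realising the prescribed degrees at prescribed positions.

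For the top monotone $m$-cycle I would exploit the freedom to choose, within the feasibility of $H$, which positions of $\mathbb{Z}_n$ carry the high degree, placing them as a consecutive block $0,1,\dots,p-1$. Because $k\ge 2$, a cycle winding once around $\mathbb{Z}_n$ and visiting exactly $m$ distinct vertices exists whenever $m$ admits an expression as a sum of $m$ forward steps of sizes in $\{1,\dots,k\}$ totalling $n$, i.e.\ for all $m$ with $\lceil n/k\rceil\le m\le n$; forcing unit steps through $0,1,\dots,p-1$ makes the $p$ high vertices consecutive on the cycle, and distributing the larger steps over the low part tunes the vertex count to the target $m$. Shorter lengths $m$ are obtained from a separately tailored realisation using a local cycle through a consecutive block, since the monotone cycle is required only to exist in \emph{some} realisation of $d$, not in a single fixed one. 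Along the way I would read off, by taking $m=n$, a Hamiltonian top monotone cycle, which in particular yields the unconditional triangular realisation claimed in the first sentence of the Proposition.

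The main obstacle I expect is the simultaneous fulfilment of two constraints: realising the exact bidegree with \emph{every} extra edge confined to a triangle, and controlling the monotone cycle across the whole range $\max(3,p)\le m\le p+q$. The degree-correction step is the delicate one, because $H$ must be drawn from the single length $k+1$, and $C_n(k+1)$ can decompose into odd cycles (or be too short to contain the required configuration when $p$ is small), so in certain extremal subcases no global circulant choice of $H$ works and one must instead patch in a bespoke local gadget that reroutes a few skeleton edges while preserving all triangles. Finally, the genuinely exceptional case $(p,q,m)=(b,2,b+2)$ must be excluded rather than resolved: there the only realisation is $K_{b+2}-e$, whose Hamiltonian cycles would have to place the two degree-$b$ vertices, namely the endpoints of the missing edge, consecutively, which is impossible since that edge is absent; hence no top monotone $(b+2)$-cycle can exist, exactly as stated.
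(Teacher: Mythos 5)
Your mechanism for even $b$ (a $b$-regular consecutive circulant plus a matching of span-$(k+1)$ chords on the high-degree vertices) is exactly the paper's construction in that case, and your analysis of why $(p,q,m)=(b,2,b+2)$ must be excluded is correct. But the two requirements you impose simultaneously --- that the high vertices form a consecutive block $0,1,\dots,p-1$ of $\mathbb{Z}_n$, and that the deficiency graph $H$ consist only of span-$(k+1)$ chords --- are incompatible in a wide and entirely non-extremal range of cases. If $b=2k$ and $p\le k+1$, no two vertices of the block differ by $k+1$ (mod $n$, since $n\ge 2k+2$), so no matching chord fits inside the block at all; even for larger $p$ (e.g.\ $p=k+3$, block $\{0,\dots,k+2\}$) the chords $(0,k+1),(1,k+2)$ leave $\{2,\dots,k\}$ uncoverable. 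The paper resolves precisely this tension by giving up consecutiveness on $\mathbb{Z}_n$: it places the high vertices at $\{0,\dots,r-1\}\cup\{c+1,\dots,c+r\}$ via the chords $(i,i+c+1)$ and then \emph{routes} the top monotone cycle through the added chords, e.g.\ beginning with the path $(0,c+1,c+2,\dots,c+r,r-1,\dots,1)$, with a separate chord pattern for the Hamiltonian case $r=c$. Constructing these routed cycles for every $m$ in the admissible range is the actual content of that case, and it is missing from your plan.

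The odd-$b$ case has a gap of the same kind but worse. Your $H$ (degree $2$ at high vertices, $1$ at low vertices, inside the $2$-regular graph $C_n(k+1)$) must give each cycle of $C_n(k+1)$ an even number of odd-degree vertices, so each cycle must carry an even number of low vertices; moreover every maximal arc of low vertices pinched between two high vertices must have even length. For $d=(6^8,5^4)$ (so $n=12$, $k+1=3$, three $4$-cycles) a consecutive block of $8$ high vertices always splits the $4$ low vertices as $1,1,2$ among the three cycles, so your construction is infeasible for \emph{every} rotation of the block --- this is not an ``extremal subcase'' but a typical one, and the ``bespoke local gadget'' you invoke is where the whole difficulty sits. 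The paper avoids this by using a different mechanism when $b$ is odd: it starts from the $(b+1)$-regular circulant and \emph{deletes} a perfect matching of unit edges at the $q$ low vertices (keeping both degree classes consecutive), which preserves triangularity when $b\ge 7$; the case $b=5$, where deletions can destroy triangles, then consumes a page of ad hoc constructions. Finally, your winding cycles only cover $m\ge\lceil n/k\rceil$, and the short cycles you defer to ``a separately tailored realisation'' are exactly the local out-and-back cycles (of the type $(0,2,\dots,2s-2,2s-1,\dots,3,1)$) that the paper builds explicitly. None of these gaps looks unfixable, but filling them honestly amounts to redoing essentially all of the paper's case analysis.
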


\begin{remark} \label{r:Kminuse}
  In the exceptional case $(p,q,m)=(b,2,b+2)$ in Proposition~\ref{p:sb1}, we have $d=((b+1)^b, b^2)$. The only graph $G$ realising this degree sequence is the complete graph $K_{b+2}$ minus an edge. The graph $G$ is clearly triangular (for $b \ge 4$), but does not contain a top monotone (Hamiltonian) $(b+2)$-cycle, as its two vertices of degree $b$ are not connected by an edge. However, for any $2 \le r \le \frac12b+1$, it contains a $(b+2)$-cycle $C'_r$ in which these two vertices are at distance $r$; we will use this cycle later in the proof of Proposition~\ref{p:pgeb}.
\end{remark}

In the first paragraph of the proof, we construct a triangular realisation of the regular graphical sequence $(b^q)$, where $b \ge 4$ is even. For the rest of the proof we refer to that realisation as to the \emph{standard realisation of $(b^q)$}.

\begin{proof}
  We first consider a regular graphical sequence $d=(b^q)$. Note that $q \ge b+1 \ge 5$. We construct a triangular realisation of $d$ as the following circulant graph. Take $V=\mathbb{Z}_q$, which we identify with the set $\{0,1,\dots,q-1\}$, and join the vertices $i,j \in V$ by an edge if and only if $i-j \in S$, where the subset $S \subset \mathbb{Z}_q$ of cardinality $b$ has the following three properties: $0 \notin S, \; -S = S$ and $S+S \supset S$. It is not hard to see that any choice of such $S$ gives a triangular realisation of $d$. We will use the following choices of $S$. If $b=2c$ is even (note that $c \ge 2$), define $S=\{\pm1, \pm 2, \dots, \pm c\}$. If $b=2c+1$ is odd (again, $c \ge 2$), then $q=2l$ is even, and we take $S=\{\pm 1, l, l \pm 1, \dots, l \pm (c-1)\}$.

  The triangular graph $G$ so constructed contains a top monotone cycle $C_m \subset G$, for any $3 \le m \le q$. To see this, we first suppose that $b=2c$ is even. In the case when $m=2s$ is even ($s \ge 2$), we take $C_m=(0, 2, \dots, 2s-2, 2s-1, \dots, 3, 1)$, and when $m=2s+1$ is odd, take $C_m=(0, 2, \dots, 2s, 2s-1, \dots, 3, 1)$. Now let $b=2c+1$ be odd; then $q=2l$ is even and $l \ge 3$. If $m=2s$ is even, take $C_m=(0, 1, \dots, s-1, s+l-1, s+l-2, \dots, l+1, l)$, and if $m=2s+1$ is odd, we can take $C_m=(0, 1, \dots, s-1, s+l-1, s+l-2, \dots, l+1)$.

  \medskip

  Now let $d=((b+1)^p,b^q)$, with $b \ge 4,\, p,q > 0$ be a graphical sequence (recall that this is equivalent to the fact that $(b+1)p + bq$ is even and $b+2 \le p+q$ --- see Section~\ref{s:2v}). If $p \ge b+2$, we can apply the Havel-Hakimi reduction to $d$ on a vertex of degree $(b+1)$ to obtain the graphical degree sequence $d'=((b+1)^{p-b-2},b^{q+b+1})$. As $q+b+1 > 2$, this sequence is not one of the excluded sequences in the proposition. Suppose for a given $\max(3,p) \le m \le p+q$, we can construct a required triangular realisation $G'$ containing a top monotone cycle $C_{m-1}=(v_1, \dots, v_{m-1})$ with $\deg(v_i)=b+1$ for $i \le p-b-2$ and $\deg(v_i)=b$ for $i > p-b-2$ (note that $p > 3$, and so $\max(3,p)=p$ which implies $m-1 \ge \max(3,p-b-2)$; and clearly, $m-1 \le p+q-1$). To construct $G$, we add to $G'$ a vertex $u$ and join it with the vertices $v_{p-b-1}, \dots, v_{p-1}$ of $C_{m-1} \subset G$ (recall that $m \ge p$). The resulting graph $G$ is clearly triangular, as all the added edges $(u,v_i)$ belong to at least one of the $3$-cycles $(u,v_i,v_{i\pm1})$. Moreover, the cycle $C_m=(v_1, \dots, v_{p-b-2}, v_{p-b-1},u, v_{p-b}, \dots, v_{p-1},v_p,\dots,v_{m-1}) \subset G$ is a top monotone $m$-cycle. Repeating this procedure if necessary we see that it suffices to prove the claim under additional assumption
  \begin{equation}\label{eq:Cm}
  1 \le p \le b+1.
  \end{equation}

  We consider two cases, according to whether $b$ is odd or even, and we divide each case into several subcases.

  \underline{Case 1.} Suppose $b$ is odd. Then $b \ge 5$ and $q$ is even. Denote $c=\frac12(b+1), \; c \ge 3$.

  We first consider a particular case when $q=2$ and $m=p+2$ (so that we require $C_m$ to be Hamiltonian). Then $d=((2c)^p,(2c-1)^2)$, and the graphicality condition gives $2c \le p+1$. We assume that $2c \le p$ to avoid the case excluded in the statement (so from \ref{eq:Cm} we have $p=b+1$, but we do not need this fact). Construct the standard realisation $G'$ of the sequence $((2c)^{p+2})$ as above: take $V=\mathbb{Z}_{p+2}$ as the vertex set of $G'$, and join the vertices $i,j \in V$ by an edge if and only if $i-j \in \{\pm1, \pm2, \dots, \pm c\}$. Remove from $G'$ the edges $(0,c)$ and $(1,c+1)$ (this does not violate triangularity) and add the edge $(0,c+1)$ (note that $c+1 \ne -c$ in $\mathbb{Z}_{p+2}$, as $2c \le p$). The resulting graph $G$ realises our degree sequence $d$ and is triangular, as the edge $(1,c+1)$ belongs to the $3$-cycle $(0, 2,c+1)$. The vertices of degree $(2c-1)$ of $G$ are $1$ and $c$; all the other vertices have degree $2c$. The required cycle $C_m$ is given by $(1,c,c+1,c+2,\dots, -1,0,c-1, c-2, \dots, 2)$.

  In what follows we assume that either $q > 2$ or $m < p+q$.

  Suppose $b \ge 7$. Denote $c=\frac12(b+1)$ and construct the standard realisation $G'$ of the sequence $((b+1)^{p+q})$: take $V=\mathbb{Z}_{p+q}$ as the vertex set of $G'$, and join the vertices $i,j \in V$ by an edge if and only if $i-j \in \{\pm1, \pm2, \dots, \pm c\}$. Denote $r = \frac12 q$ and remove from $G'$ the edges $(0,1), \, (2,3), \dots, (2r-2,2r-1)$. The resulting graph $G$ realises the degree sequence $d=((b+1)^p,b^q)$. Moreover, $G$ is triangular. Indeed, any edge $(i,i+1)$ of $G$ belongs to the $3$-cycle $(i,i+1,i+3)$ (note that $c \ge 4$ as $b \ge 7$), and all the other edges are covered by the $3$-cycles $(i, i+c_1,i+c_1+c_2)$ (where $c_1,c_2 \ge 2, \, c_1+c_2 \le c$), except for the edges $(i,i+3)$ when $c=4$. But in the latter case, $G$ contains at least one of the edges $(i+3,i+2), \, (i+3,i+4)$, and so the edge $(i,i+3)$ belongs to the corresponding $3$-cycle (note that in the graph $G$, the corresponding degree sequence $d_i$ for $i\in\mathbb{Z}_{p+q}$ is non-decreasing, contrary to our usual convention).

  We next show that for all $\max(3,p) \le m \le p+q$, the graph $G$ so constructed contains a top monotone $m$-cycle $C_m$ (that is, a cycle $C_m$ containing a path $P$ passing through all the vertices of degree $(b+1)$ of $G$). We start with defining the path $P$. If $p=1$, the path $P$ consists of a single vertex $-1$. Otherwise, we take for $P$ the union of the edge $(q,q+1)$ and the paths $(q,q+2,q+4, \dots, q_1)$ and $(q+1,q+3,q+5, \dots, q_2)$, where $\{q_1,q_2\}=\{-1,-2\}$. The path $P$ so constructed has endpoints $-1$ and $-2$ and passes through all the vertices of $G$ of degree $(b+1)$. Let $m'=m-p$ (note that $0 \le m' \le q\, (=2r)$). Suppose $m'$ is odd (so that $m'=2l+1$ with $0 \le l \le r-1$). If $P$ contains at least two vertices, we attach to $P$ two paths $P_1=(-1,1,3, \dots, 2l-1)$ and $P_2=(-2,0,2, \dots, 2l)$ and the edge $(2l-1,2l)$ (note that this edge lies in $G$ by construction) which produces a required cycle $C_m$. If $P=\{-1\}$, then $p=1$ and so $m' \ge 2$; then we repeat the above construction, with the first vertex of $P_2$ replaced by $-1$. Now suppose $m'=2l$ is even, $0 \le l \le r$. If $l=0$ or $l=1$, we can assume that $P$ contains at least two vertices (if $l=0$, we have $p=m \ge 3$, and if $l=1$ and $p=1$, we have $m=3$ and we can take for $C_3$ any triangle containing the vertex $-1$). Then we add to $P$ the edge $(-2,-1)$ when $l=0$, and the path $(-2,1,2,-1)$ when $l=1$ (this is always possible unless the vertex $2$ has degree $b+1$, that is, when $q=2$, but then $m=p+2$, which brings us to a sequence considered at the start of this case). If $l \ge 2$ and $P$ contains at least two vertices, we attach to $P$ two paths $P_1=(-1,1,3, \dots, 2l-5)$ and $P_2=(-2,0,2, \dots, 2l-4)$ and the path $(2l-4,2l-1,2l-3,2l-2,2l-5)$ (note that all its edges lie in $G$ by construction). If $l \ge 2$ and $P=\{-1\}$, the same construction works, with the first vertex of $P_2$ replaced by $-1$.

  Suppose $b=5$. We have $d=(6^p,5^{2r})$, where $r \ge 1, \, p+2r \ge 7$ and where by~\eqref{eq:Cm} we can assume that $1 \le p \le 6$. When $r=1$, we can additionally assume that $m < p+2$ (the case $m=p+2$ is excluded in the statement).

  We start with a graph $G'_N$ which gives a triangular realisation of the sequence $(5^{2N})$ (where $N \ge 3$ will depend on $r$ and $p$) as constructed above, but will present it a little differently. Namely, we consider a disjoint union of two $N$-cycles $C^1$ and $C^2$ and label the vertices of $C^\alpha, \, \alpha=1,2$, in the consecutive order, by $i_\alpha$, where $i \in \mathbb{Z}_N$. We then add all the edges of the form $(i_1,i_2)$ and $(i_1, (i\pm1)_2)$. The resulting graph $G'_N$ is clearly triangular and realises the sequence $(5^{2N})$. Note that removing any number of edges $(i_1,(i+1)_2)$ from $G'_N$ does not violate triangularity.

  We first consider the case when $p$ is odd. When $p=1$ we have $d=(6^1,5^{2r})$, where $r \ge 3$. We take the graph $G'_r$, remove the edges $(0_1,1_2), \, (1_1,2_2), (2_1,3_2)$, add a vertex $u$ and join it to the all six endpoints of these removed edges of $G'_r$. The resulting graph $G$ realises $d$ and is triangular. To construct a top monotone cycle $C_m \subset G$, we define $l=\lfloor\frac12m\rfloor$ and take the union of the path $(1_2,u,1_1)$ and the path $(1_1, 2_1, \dots, l_1, (m-l-1)_2, \dots, 1_2)$. Similarly, for $p=3$, our sequence is given by $d=(6^3,5^{2r})$, where $r \ge 2$. We take the graph $G'_{r+1}$, remove the edges $(0_1,1_2)$ and $(1_1,2_2)$, add a vertex $u$ and join it to the four endpoints of these removed edges and to the vertices $2_1$ and $3_2$ of $G'_{r+1}$. We obtain a graph $G$ which realises $d$ and is triangular (its vertices of degree $6$ are $u, 2_1$ and $3_2$). We construct a top monotone cycle $C_m \subset G$ as follows. If $m=2l, \, 2 \le l \le r+1$, the cycle $C_m$ is given by $(u,2_1, \dots, l_1, (l+1)_1,(l+1)_2, l_2, \dots, 3_2)$. If $m=2l+1$, we have $1 \le l \le r+1$. When $l \le r-1$, the cycle $C_m$ is given by $(u,2_1, \dots, l_1, (l+1)_1,(l+2)_2, (l+1)_2, \dots, 3_2)$; for $l=r+1$ we take $C_{2r+3}=(u,2_1, 1_1, 0_1 \dots, 3_1, 2_2,1_2,0_2, \dots, 3_2)$; for $l=r$ we take $C_{2r+1}=(u,2_1, 3_1, \dots, 0_1, 1_1, 0_2, -1_2, \dots, 3_2)$. Finally, let $p=5$. Then $d=(6^5,5^{2r})$, where $r \ge 1$. If $r=1$, the only realisation of $d$ is $K_7$ minus an edge which is triangular and has top monotone cycles $C_m$ for $m=5,6$ (case $m=7$ is excluded in the statement of the proposition). We can now assume that $r \ge 2$. Take the graph $G'_{r+2}$, remove the edge $(2_1,2_2)$, add a vertex $u$ and join it to the vertices $i_1, i_2, \; i=0,1,2$. The resulting graph $G$ realises $d$ and is triangular (its vertices of degree $6$ are $0_1, 1_1,0_2, 1_2$ and $u$). A top monotone cycle $C_m \subset G$ for $m \ne 7$ is given by $(u,0_1,1_1, \dots, (l-1)_1,(m-l-2)_2, (m-l-2)_2, \dots, 1_2,0_2)$, where $l=\lfloor\frac12m\rfloor$, and by $C_7=(u,1_1,0_1,-1_1,-1_2,0_2,1_2)$.

  Now suppose that $p$ is even. Let $p=2s,\, s=1,2,3$. Similar to the above, we start with the triangular graph $G'_{r+s}$ realising the sequence $(5^{2s+2r})$ (note that $s+r \ge 4$). If $s=1$, we add to $G'_{r+1}$ the edge $(0_1,-2_2)$. To construct a top monotone cycle $C_m$ in the resulting graph $G$, we take the union of the edge $(0_1,-2_2)$, the paths $(0_1, 1_1, \dots, l_1)$ and $(-2_2, -1_2,0_2, \dots, t_2)$, where $l-t \in \{0,1\}$ and the edge $(l_1,t'_2)$. This construction works unless $m=p+q \, (=2r+2)$, in which case we take $C_m=(0_1,1_1, \dots, -1_1,-1_2,0_2,1_2 \dots,-2_2)$. When $s=2$, we add to $G'_{r+2}$ the edges $(0_1,-2_2)$ and $(1_1,-1_2)$. The top monotone cycles $C_m$ for $m < p+q$ are the same as in the case $s=1$. For $m=p+q$, the cycle $C_m$ constructed for $s=1$ does not work, as it does not pass through all the vertices of degree $b+1$ consecutively: we take $C_m=(0_1,1_1, \dots, -1_1,0_2,1_2, \dots,-2_2,-1_2)$ instead. For $s=3$ and $m <p+q$, the similar construction works: the graph $G$ is obtained by adding edges $(0_1,-2_2),(1_1,-1_2)$ and $(2_1,0_2)$ to $G'_{r+3}$; the top monotone cycles are the same as in the previous two cases. When $s=3$ and $m=p+q$ (that is, for the sequence $d=(6^6,5^{2r}), \, r \ge 1$, when we need a Hamiltonian top monotone cycle), our construction is different. When $r \ge 4$, take $G'_{r+3}$ and add to it the edges $(0_1,2_1), (1_1,3_1)$ and $(4_1,6_1)$. The cycle $C_{2r+6}$ is given by the union of the path $(6_1,4_1,3_1,2_1,1_1,0_1,0_2,1_2,2_2,3_2,4_2,5_2,5_1,6_2)$ and the path $(6_1,7_1, \dots,-1_1,-1_2, -2_2, \dots, 6_2)$. The remaining cases are $r \in \{1,2,3\}$, which gives the sequences $d=(6^6,5^2), \, (6^6,5^4)$ and $(6^6,5^6)$. For $d=(6^6,5^6)$ we take the standard realisation of the sequence $(6^{12})$ and remove the edges $(0,3), (1, 4)$ and $(2,5)$, which gives a triangular graph, with a Hamiltonian top monotone cycle $C_{12}=(0,1,2,\dots, 11)$. Similarly, for $d=(6^6,5^4)$ we take the standard realisation of the sequence $(6^{10})$ and remove the edges $(0,3)$ and $(2,5)$. The resulting graph is triangular, with a Hamiltonian top monotone cycle $C_{10}=(4,6,7,8,9,1,0,2,3,5)$. And finally, for $d=(6^6,5^2)$ and we take for our graph $G$ the complete graph $K_8$ minus the disjoint union of two $2$-paths and an edge. It is clearly triangular and a Hamiltonian top monotone cycle $C_{8}$ can be easily constructed.

  \underline{Case 2.} Now suppose $b$ is even. Then $p$ is also even. Denote $b=2c$ and $p=2r$, so that $d=((2c+1)^{2r},(2c)^q)$. We have $c \ge 2$ and $2r+q \ge 2c+2$. Also, by \eqref{eq:Cm} we can assume that $1 \le r \le c$. From this and the previous inequality we have $q \ge 2$. If $q=2$, then $r=c$, and (the only) graph $G$ realising $d$ is $K_{2c+2}$ minus an edge. It is easy to see that such $G$ is triangular, and that for any $2c \le m \le 2c+1$ there is a top monotone cycle. If $m=2c+2$ we come to the excluded case (see Remark~\ref{r:Kminuse}). We will therefore assume that $q \ge 3$.  

  We start with the standard realisation $G'$ of the sequence $((2c)^{2r+q})$ with the vertex set $\mathbb{Z}_{2r+q}$ and with two vertices $i, j \in \mathbb{Z}_{2r+q}$ being connected by an edge if and only if $i-j \in \{\pm 1, \pm 2, \dots, \pm c\}$. We construct the required graph $G$ by adding $r$ edges of the form $(i,i+c+1)$ to $G'$. The resulting graph will always be triangular, and we will only need to establish the top monotone cycle property for it.

  First let $r=1$. Then $G$ is obtained from $G'$ by attaching the edge $(0,c+1)$. For $3 \le m \le c+2$, a cycle $C_m$ is given by $(0,1, \dots, m-2,c+1)$. If $m > c+2$, we denote $m'=m-c-2, \, l=\lfloor m'/2 \rfloor$, and construct the cycle $C_m$ as the union of the $(c+1)$-path $(1, \dots, c,c+1,0)$, two paths $(0, -2, -4, \dots, -2l)$ and $(1, -1, -3, \dots, 1-2(m'-l))$, and the edge $(-2l,1-2(m'-l))$.

  Next suppose $m=2r+q$ (so that the required cycle $C_m$ is Hamiltonian) and $r=c$. We construct $G$ by adding to $G'$ the edges $(0, c+1), (2, c+3), (3,c+4), \dots, (c, 2c+1)$. Then $C_m=(0,c+1,c+3, c+4, \dots, 2c+1, c, c-1, \dots, 3, 2, c+2,2c+2, 2c+3, 2c+4, \dots, 2c+q-1, 1)$ (note that $2c+q-1 = -1$ in $\mathbb{Z}_{2c+q}$).

  In all the other cases ($r \ge 2$ and either $m < 2r+q$ or $r \le c-1$) we take for $G$ the graph obtained from $G'$ by attaching the edges $(0, c+1), (1, c+2), \dots, (r-1, r+c)$. Note that the vertices $0, 1, \dots, r-1$ and $c+1, \dots, c+r$ of $G$ have degree $(2c+1)$. The vertices of degree $2c$ form two ``connected components": $r, \dots, c$ of cardinality $c-r+1$ and $c+r+1, c+r+2, \dots, -2,-1$ of cardinality $q+r-c-1$. First suppose that $m \le 3r+q-c-1$ (so that in $C_m$, there are no more vertices of degree $2c$ than in the second ``component"); note that if $r=c$, then the inequality $m \le 3r+q-c-1$ is always satisfied (as we assume $m < 2r+q$). We denote $m'=m-2r, \, l=\lfloor m'/2 \rfloor$, and construct the cycle $C_m$ as the union of the $(2r-1)$-path $(0,c+1,c+2, \dots, c+r, r-1, r-2, \dots, 1)$, two paths $(0, -2, -4, \dots, -2l)$ and $(1, -1, -3, \dots, 1-2(m'-l))$, and the edge $(-2l,1-2(m'-l))$. Finally let $m > 3r+q-c-1$ (and we can assume that $r \le c-1$). Denote $m'=m-(3r+q-c-1)$ and construct the cycle $C_m$ as the union of the $(2r-1)$-path $(0,c+1,c+r, c+r-1, \dots, c+2, 1, 2, \dots, r-1)$ and the path $(r-1,c-m'+1,c-m'+2,\dots, c-1, c, r+c+1, r+c+2, \dots, -2,-1, 0)$ (note that degree $2c$ vertices $c$ and $r+c+1$ are connected by an edge as $r+1 \le c$, and that $c-m'+1 \ge r$).
\end{proof}

\subsection{Biregular sequences \texorpdfstring{$(a^p,b^q)$}{(a\unichar{"005E}p,b\unichar{"005E}q)} with \ensuremath{p \texorpdfstring{\ge}{\unichar{"2265}} b}}
\label{ss:pgeb}

Let $d=(a^p,b^q)$ with $a>b \ge 4$ and $p,q >0$ be a graphical sequence. We are aiming to construct its triangular realisation. By Proposition~\ref{p:sb1}, we can additionally assume that $a \ge b+2$. In this section, we consider the case $p \ge b$.

\begin{proposition} \label{p:pgeb}
  Let $d=(a^p,b^q)$, with $a\ge b+2, \, b \ge 4$ and $p,q >0$, be a graphical sequence. Suppose $p \ge b$. Then the sequence $d$ admits a triangular realisation.
\end{proposition}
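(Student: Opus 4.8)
The plan is to follow the reduction-and-reattachment scheme announced in the outline, using the fact (from Proposition~\ref{p:sb1}) that the width-$1$ case is already settled, so I may assume $a \ge b+2$, and here $p \ge b$. The idea is to peel off the $q$ low-degree vertices by repeated Havel--Hakimi reductions, realise the residual sequence by a triangular graph carrying a top monotone cycle via Proposition~\ref{p:sb1}, and then reverse the reductions. The hypothesis $p \ge b$ is exactly what makes this work at the first step: applying the Havel--Hakimi reduction to a vertex of degree $b$, we connect it to the top $b$ remaining degrees, and since $p \ge b$ these are $b$ of the high-degree vertices. Carrying this out for all $q$ low vertices, distributing the $qb$ degree-decrements across the $p$ high vertices as evenly as possible, leaves a residual \emph{width-$1$} sequence on the $p$ high vertices, of the form $((\gamma+1)^{p_1},\gamma^{p_0})$ with $\gamma = a - \lfloor qb/p\rfloor$. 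Proposition~\ref{p:sb1} then supplies a triangular realisation $G^{*}$ of this residual sequence together with a (Hamiltonian, $m=p$) top monotone cycle $C=(v_1,\dots,v_p)$; note $p \ge b \ge 4 \ge 3$, so the cycle-length hypotheses of Proposition~\ref{p:sb1} are met.

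I would then undo the reductions one at a time. Each reverse step re-introduces one low vertex $u$ and joins it, as the Kleitman--Wang form of Havel--Hakimi in Section~\ref{s:2v} permits, to a run of $b$ \emph{consecutive} vertices $v_i,v_{i+1},\dots,v_{i+b-1}$ of the current top monotone cycle. Because consecutive vertices of $C$ are adjacent, every new edge $(u,v_j)$ lies in the triangle $(u,v_j,v_{j+1})$ (or $(u,v_j,v_{j-1})$ at the end of the run), so triangularity is preserved automatically at every reverse step, while $G^{*}$ is triangular by Proposition~\ref{p:sb1}. The bookkeeping of the degrees is governed by the Erd\H{o}s--Gallai inequalities~\eqref{eq:EG2}: rewriting the second as $p\bigl(a-(p-1)\bigr)\le bq$ shows that the total degree the high vertices cannot absorb among themselves is at most the capacity $bq$ of the low vertices, which is precisely the number of high--low incidences the reattachment creates. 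A high vertex that must finish at degree $a$ is covered by exactly $a-\deg_{G^{*}}$ of the reattached runs, so the reverse steps must be scheduled so that each cycle vertex is covered the prescribed number of times; the ``even'' placement of length-$b$ arcs around the $p$-cycle realises coverage within one of uniform, matching the width-$1$ slack of the residual sequence.

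The heart of the argument, and the step I expect to be the main obstacle, is this scheduling together with the verification that a top monotone cycle of the required length survives each reverse step, so that the next vertex can again be attached to a consecutive run ``in the correct order.'' A balanced-word placement of the arcs (arc $t$ starting near position $\lfloor t\,p/q\rfloor$) is the natural device, but one must check that the accumulated coverage lands exactly on the target degrees, which forces a case split according to whether the residual high-degree $\gamma = a-\lfloor qb/p\rfloor$ stays at least $4$: when $q$ is so large that the high vertices cannot absorb all $qb$ incidences (i.e.\ $qb$ approaches or exceeds $pa$), the all-onto-highs peeling fails and one must instead retain some low vertices in the base and allow low--low edges, giving a separate sub-construction. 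Finally, the degenerate output $((b+1)^{b},b^{2})=K_{b+2}\setminus e$ excluded in Proposition~\ref{p:sb1}, which has no Hamiltonian top monotone cycle, must be handled by hand: when the residual sequence is this one I would instead use the cycle $C'_r$ furnished by Remark~\ref{r:Kminuse}, and dispose of the small-$q$ parity and divisibility boundary cases directly.
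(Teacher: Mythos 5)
Your skeleton is the same as the paper's: peel off low-degree vertices by Havel--Hakimi, realise the residual width-$1$ sequence by Proposition~\ref{p:sb1} with a top monotone cycle, then reverse the reductions, attaching each restored vertex to $b$ \emph{consecutive} cycle vertices so that every new edge lies in a triangle. But two essential pieces are missing. First, your primary construction peels off \emph{all} $q$ low vertices and lands on a residual sequence $((\gamma+1)^{p_1},\gamma^{p_0})$ supported on the $p$ high vertices alone, with $\gamma=a-\lfloor qb/p\rfloor$. This is impossible whenever $qb$ is large compared to $p(a-b)$: for instance $d=(6^4,4^{100})$ is graphical and satisfies all hypotheses of the proposition, yet your formula gives $\gamma=-94$. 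This is not a boundary nuisance but the generic situation for large $q$; you acknowledge it and defer to ``a separate sub-construction,'' but that sub-construction is precisely the missing half of the proof. The paper avoids the dichotomy altogether by stopping the reduction at the first index $l$ for which either $l=q$ \emph{or} the top degree has dropped to $b+1$; in the latter case the residual sequence still contains terms equal to $b$, has width $1$ as a whole, and Proposition~\ref{p:sb1} is invoked with $m=p$, i.e.\ with a \emph{non-Hamiltonian} top monotone $p$-cycle inside a larger triangular graph, after which the reverse steps proceed uniformly. Your proposal only ever uses the Hamiltonian case of Proposition~\ref{p:sb1}, so it cannot cover this situation.

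Second, the step you yourself flag as ``the main obstacle'' --- scheduling the length-$b$ arcs so that the accumulated coverage hits the target degrees exactly while a usable cycle survives every reverse step --- is the actual inductive content of the proof, and you do not supply it. The paper resolves it without any global scheduling, via an invariant plus a uniqueness observation: every intermediate sequence $d(k)$ has its top $p$ terms of width at most $1$ (inequality~\eqref{eq:dk1p}); among all sequences that reduce to $d(k)$ there is a \emph{unique} one with this property, namely $d(k-1)$; and the greedy per-step rule ``join the new vertex $u$ to the $b$ consecutive cycle vertices beginning immediately after the current block of higher-degree vertices, wrapping around the cycle if that block is shorter than $b$'' realises exactly $d(k-1)$ and keeps $C_p$ top monotone after cyclic relabelling. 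With a fixed, pre-computed balanced-word placement you would still owe a proof of the coverage claim, which is absent, so your induction does not close. A smaller omission: in the exceptional residual $K_p$ minus an edge you rightly invoke Remark~\ref{r:Kminuse}, but attaching $u$ along an arc of length $b-1$ between the two low-degree vertices requires $b-1\le p-2$; this holds only because the degree-sum identity $pa-qb=p^2-p-2$ rules out $p=b$ in that case, a check that should be made explicit.
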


\begin{proof}
  We start by applying the Havel-Hakimi reduction (see Section~\ref{s:2v}) to the sequence $d(0):=d$ starting from a vertex of degree $b$. As $b \le p$, at the first step, we remove a vertex of degree $b$, reduce by $1$ the degrees of $b$ out of the first $p$ vertices and rearrange the degrees in the non-increasing order. We obtain a (still graphical) sequence $d(1)=(a^{p-b},(a-1)^b,b^{q-1})$. If $d(1)_1 > b+1$ and $q-1 >0$ we apply the Havel-Hakimi reduction to the sequence $d(1)$, again starting from a vertex of degree $b$. We get a sequence $d(2)$ given either by $(a^{p-2b},(a-1)^{2b},b^{q-2})$ if $p \ge 2b$, or by $((a-1)^{2p-2b},(a-2)^{2b-p},b^{q-2})$ if $p < 2b$. If $d(2)_1 > b+1$ and if $q-2 > 0$, we apply the Havel-Hakimi reduction again to obtain a sequence $d(3)$, and so on. We continue until we get a sequence $d(l)$, where $l \ge 1$ is the smallest number with one of the two properties: either $l=q$ (that is, we have ``run out of vertices of degree $b$''), or $l < q$ and $d(l)_1 = b+1$. It is easy to see that all the sequences $d(j), \, j=0, 1, \dots, l$, have at least $p$ terms and satisfy the inequality
  \begin{equation}\label{eq:dk1p}
  d(j)_1-d(j)_p \le 1.
  \end{equation}
  Moreover, by our construction, the sequence $d(l)$ satisfies the conditions of Proposition~\ref{p:sb1}, and the last term of the sequence $d(l)$ is greater than or equal to $b$. By  Proposition~\ref{p:sb1}, we can construct a triangular graph $G_l$ whose degree sequence is $d(l)$ and which contains a top monotone $p$-cycle $C_p$. The only exception is when the degree sequence $d(l)$ has a unique realisation $G_l$ which is the complete graph minus an edge, and the length of $d(l)$ is $p$ (that is, $l=q$ and $d(l)=((p-1)^{p-2},(p-2)^2)$). In this exceptional case, there is no top monotone cycle of length $p$, and we take for the cycle $C_p$ a Hamiltonian cycle in $G_l$ in which one of the arcs between the two vertices of degree $p-2$ has length $b-1$ (such a cycle clearly exists; see Remark~\ref{r:Kminuse}). We now perform $l$ steps of the Havel-Hakimi reduction in reverse constructing the sequence of graphs $G_l \subset G_{l-1} \subset \dots \subset G_1 \subset G_0$ with the following properties: for all $k= l-1, \dots, 1, 0$,

  \begin{itemize}
    \item the degree sequence of $G_k$ is $d(k)$,
    \item every graph $G_k$ is triangular, and
    \item the cycle $C_p \subset G_k$ is a top monotone $p$-cycle in $G_k$.
  \end{itemize}

  Suppose that for some $k=l,l-1, \dots, 1$, a graph $G_k$ with these three properties is already constructed. Its degree sequence $d(k)$ has the form $d(k)=((c+1)^{p_1}, c^{p_2},b^{q'})$, where $q', p_1 \ge 0, \, p_2 > 0, \, p_1 + p_2 =p$, and $c \ge b$. Moreover, $G_k$ contains a top monotone cycle $C_p$ whose consecutive vertices $v_1, \dots, v_p$ satisfy $\deg_{G_k}(v_i)=c+1$ for $i=1, \dots, p_1$, and $\deg_{G_k}(v_i)=c$ for $i=p_1+1, \dots, p$ (note that we may have $c=b$).

  As the degree sequence $d(k-1)$ satisfies the inequality~\eqref{eq:dk1p}, we can have one of two cases (note that there may exist several different degree sequences ending with $b$ from which we obtain the sequence $d(k)$ by removing that final term $b$ and subtracting $1$ from the top $b$ terms, but among them, there is a \emph{unique} degree sequence, namely $d(k-1)$, whose first and $p$-th terms differ by no more than $1$): either $p_2 \ge b$ and then $d(k-1)=((c+1)^{p_1+b}, c^{p_2-b},b^{q'+1})$, or  $p_2 < b$ and then $d(k-1)=((c+2)^{b-p_2}, (c+1)^{p+p_2-b},b^{q'+1})$.

  We now construct the graph $G_{k-1} \supset G_k$ by adding a vertex $u$ of degree $b$ to $G_k$ and joining it to $b$ vertices of the cycle $C_p$ as follows. In the exceptional case (which may only occur once, when we pass from $G_l$ to $G_{l-1}$), we join the vertex $u$ to $b$ consecutive vertices of the cycle $C_p$ subtending the arc of $C_p$ whose endpoints have degree $p-2$. The resulting graph $G_{l-1}$ is triangular, has the degree sequence $d(l-1)$, and the same cycle $C_p \subset G_{l-1}$ becomes a top monotone $p$-cycle (in which we then label the vertices according to the above).

  In all the other cases, we have one of the two possibilities. In the first case ($p_2 \ge b$), the vertex $u$ is joined to the vertices $v_{p_1+1}, \dots, v_{p_1+b}$. In the second case ($p_2 < b$), we join the vertex $u$ to the vertices $v_{p_1+1}, \dots, v_p, v_1, \dots ,v_{b-p_2}$. In both cases, the resulting graph $G_{k-1}$ has the required degree sequence $d(k-1)$ and is triangular. Moreover, the same cycle $C_p \subset G_{k-1}$ is a top monotone cycle, with the same labelling of the vertices.
\end{proof}

\subsection{Biregular sequences \texorpdfstring{$(a^p,b^q)$}{(a\unichar{"005E}p,b\unichar{"005E}q)} with \texorpdfstring{$p < b$}{p < b}}
\label{ss:pltb}

Let $d=(a^p,b^q)$, where $a \ge b+2$, $b \ge 4$ and $p,q >0$, be a graphical sequence (recall that this is equivalent to the fact that $ap+bq$ is even and that $a \le p+q-1$, see Section~\ref{s:2v}). In this section, we consider the case $p < b$.

We start with some inequalities reducing the set of possible cases.

\begin{remark} \label{r:noKb+1}
In the above settings, we can assume that
\begin{equation}\label{eq:noKb+1}
a + b \ge p+q-1.
\end{equation}
Indeed, suppose $a+b < p+q-1$ and denote $q'=q-(b+1)$. We have $q'=q-(b+1) > a-p > b-p > 0$. Moreover, the number $ap + bq'$ is still even and $a \le p + q' -1$, so the sequence $(a^p, b^{q'})$ is graphical. If it has a triangular realisation $G$, then the disjoint union of $G$ and $K_{b+1}$ is a triangular realisation of the original sequence $(a^p,b^q)$. We can therefore keep reducing $q$ by $b+1$ until we reach the inequality~\eqref{eq:noKb+1}.
\end{remark}

\begin{remark} \label{r:2b}
Note that if $2b > p + q$, then \emph{any} realisation of the sequence $(a^p,b^q)$ is triangular. Indeed, if $G$ is a graph realising this sequence and there is an edge $(u,v)$ between two vertices of $G$, then $\deg(u) + \deg(v) > p+q$, and so there is a vertex of $G$ different from both $u$ and $v$ to which both $u$ and $v$ are connected (this argument, of course, applies to general sequences whose minimum is greater than half of the number of vertices). So we can assume that
\begin{equation}\label{eq:2b}
2b \le p+q.
\end{equation}
\end{remark}

We have the following.

\begin{proposition} \label{p:pltb}
  Let $d=(a^p,b^q)$ with $a \ge b+2, \, b \ge 4$ and $p,q >0$ be a graphical sequence. If $b \ge p+4$, the sequence $d$ admits a triangular realisation.
\end{proposition}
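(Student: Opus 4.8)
The plan is to give an explicit construction, placing the $q$ vertices of degree $b$ on a cycle indexed by $\mathbb{Z}_q$ and attaching the $p$ vertices of degree $a$ to balanced arcs of it. First I would invoke the reductions already in place: by Remarks~\ref{r:noKb+1} and~\ref{r:2b} together with graphicality we may assume $2b \le p+q$ and $a \le p+q-1 \le a+b$. Setting $t=(p+q-1)-a$, these give $0 \le t \le b$. The high-degree vertices will form a clique $K_p$; each of them then needs $q_0:=a-(p-1)=q-t$ further neighbours among the low vertices, i.e.\ it will \emph{miss} exactly $t$ of them. Since $q_0 \ge q-b \ge b-p \ge 4$, every such arc is long, and in particular $q_0>0$.

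Next I would distribute the misses as $p$ evenly spaced arcs of length $t$ around $\mathbb{Z}_q$, so that each low vertex $w_j$ is missed by either $\lfloor pt/q \rfloor$ or $\lceil pt/q\rceil$ high vertices. Then the number $h_j$ of high neighbours of $w_j$ takes two consecutive values, whence its required number of low-low neighbours $b-h_j$ also takes two consecutive values, each at least $b-p\ge 4$ (as $h_j\le p$). Thus the degree sequence to be realised on the low vertices alone is a width-$1$ sequence all of whose terms are $\ge 4$: exactly the situation of Proposition~\ref{p:sb1}, once one checks the parity and Erd\H{o}s--Gallai conditions for this subsequence, which can be arranged by a small shift of the arcs.

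The crux is the triangularity of the edges joining a high vertex $u_i$ to a low vertex $w_j$. Whenever $w_j$ has at least two high neighbours and $p\ge 2$, such an edge lies in the triangle $(u_i,u_\ell,w_j)$ through the clique, so these edges are harmless; in the regime where every low vertex meets at least two high vertices it therefore suffices that the low-low graph be triangular, which Proposition~\ref{p:sb1} supplies as a black box. The delicate edges are those at low vertices carrying a single high neighbour, forced when $p$ is small: their unique high-low edge must be closed by a common \emph{low} neighbour. Here I would exploit the extra structure in Proposition~\ref{p:sb1}. The low vertices of smallest $h_j$ are precisely those of largest low-low degree, hence the top vertices of the low-low sequence, and Proposition~\ref{p:sb1} provides a low-low realisation containing a top monotone cycle through them. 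Attaching each high vertex to a contiguous arc of this cycle---so that a singly-attached $w_j$ is flanked on the cycle by a second neighbour of $u_i$---closes each such edge in a triangle $(u_i,w_j,w_{j\pm1})$, exactly as in the reverse Havel-Hakimi step of Proposition~\ref{p:pgeb}. I expect reconciling the non-uniform low-low degrees $b-h_j$ with this rigid top-monotone-cycle skeleton to be the main obstacle.

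Finally I would dispose of the extreme cases separately. The case $t=0$ (equivalently $a=p+q-1$) is easiest: the high vertices are then universal, so every edge is triangulated through a high vertex and the low-low graph need only be \emph{graphical}, namely any $(b-p)$-regular graph on the $q$ low vertices. The small cases $p=1$ and $p=2$, where the clique argument for high-high and high-low edges degenerates, would be treated by hand, as would any residual parity obstruction to graphicality of the low-low subsequence. I anticipate that the general intermediate range $0<t<b$, coordinating the balanced arcs with a triangular near-regular low-low realisation carrying a suitable top monotone cycle, will be where essentially all the work lies.
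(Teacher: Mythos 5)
Your plan has the same skeleton as the paper's proof: a clique $K_p$ on the high vertices, a width-$1$ triangular graph on the low vertices supplied by Proposition~\ref{p:sb1}, contiguous arcs of high-low edges along its top monotone Hamiltonian cycle (closed into triangles $(u_i,w_j,w_{j\pm1})$), and a separate overlap argument for the clique edge when $p=2$. However, the step you yourself flag as ``the main obstacle'' is a genuine gap, and your order of operations makes it unresolvable as stated. You fix the miss-arcs first, \emph{evenly spaced} around $\mathbb{Z}_q$; the low vertices of larger low-low degree $b-h_j$ then form $p$ separate blocks spread around the cycle. But Proposition~\ref{p:sb1} delivers a realisation in which the vertices of top degree sit \emph{consecutively} along the top monotone cycle, and these two patterns cannot be reconciled by relabelling: your triangulation forces the attachment arcs to be contiguous in the cycle order, and the cycle order pins down where the top-degree low vertices are. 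Also, your suggestion that parity of the low-low sequence ``can be arranged by a small shift of the arcs'' is not coherent --- the low-low degree sum $bq-p(a-p+1)$ is independent of arc placement (fortunately, it is automatically even whenever $ap+bq$ is).

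The paper closes precisely this gap by reversing the two choices. It first determines the low-low sequence globally: writing $bq-p(a-p+1)=cq+q_1$ with $0\le q_1<q$, one gets $d'=((c+1)^{q_1},c^{q-q_1})$ with $b>c\ge 4$, which is graphical by \eqref{eq:EG2}; Proposition~\ref{p:sb1} then gives a triangular realisation $G_2$ with a Hamiltonian top monotone cycle $C_q=(v_1,\dots,v_q)$. Only \emph{then} are the high vertices attached, one at a time, each to $a-p+1$ consecutive vertices of $C_q$, with each new arc starting at the first vertex whose degree in the partial graph so far is deficient. This greedy rotation keeps the low degrees within $1$ of one another after every attachment, and a degree-sum count at the end forces all of them to equal $b$ --- no matching of a prescribed $h_j$-pattern to the cycle is ever needed. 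The remaining points of your sketch (the bound $a-p+1\ge b-p+3\ge 7$ guaranteeing every attached edge is triangulated, and the $p=2$ case, where \eqref{eq:noKb+1} gives $q\le 2a-3<2(a-p+1)$ so the two arcs overlap in a common low neighbour) are exactly as in the paper, but the ones you defer ``by hand'' are part of what must be proved, not left as residual cases.
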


\begin{proof}
  Our triangular realisation will consist of the complete graph $K_p$ on the vertices of degree $a$, of a triangular (or at least, ``sufficiently connected") graph on the vertices of degree $b$, and of $p(a-p+1)$ edges in between these two graphs.

  We can assume the inequalities~\eqref{eq:noKb+1} and~\eqref{eq:2b} to be satisfied. Note that~\eqref{eq:2b} and the assumption $b \ge p+4$ imply $q \ge b+4$.

  Moreover, as $b \ge p+4$ and $q \ge a-p+1$ by~\eqref{eq:EG2} (and $a-p+1 > b-p+1 > 0$) we obtain $bq-p(a-p+1) \ge 4q$. Let $bq-p(a-p+1)= cq + q_1$, where $0 \le q_1 < q$, and denote $q_2 = q-q_1 > 0$. Then
  \begin{equation}\label{eq:ineqc4}
    b > c \ge 4.
  \end{equation}

  We claim that the sequence $d'=((c+1)^{q_1}, c^{q_2})$ is graphical. Indeed, its sum equals $q_1(c+1)+q_2 c = bq-p(a-p+1)$ which is even provided $ap+bq$ is. The first inequality of~\eqref{eq:EG2} (the only one we need to verify for graphicality) takes the form $c+1 \le q-1$ when $q_1 \ne 0$ and the form $c \le q-1$ when $q_1 = 0$. In both cases, the required inequality follows from~\eqref{eq:ineqc4} and the fact that $q \ge b+4$, as noticed above.

  By Proposition~\ref{p:sb1}, the sequence $d'$ admits a triangular realisation $G_2$ containing a top monotonic (Hamiltonian) cycle $C_q$ (note that from~\eqref{eq:ineqc4} and the fact that $q \ge b+4$ we get $c \le b-1 \le q-5$, so the excluded case in Proposition~\ref{p:sb1} may not occur). Let $C_q=(v_1, v_2, \dots, v_q)$, where $\deg v_i=c+1$ for $i=1, \dots,q_1$, and $\deg v_i=c$ for $i=q_1+1, \dots,q$. Take $G_1=K_{p}$ to be the complete graph. In the realisation $G$, the vertices of $G_1$ will have degree $a$, and the vertices of $G_2$, degree $b$. The construction goes similarly to that in the proof of Proposition~\ref{p:pgeb}. We take the first vertex of $G_1$ and join it to $a-p+1$ vertices $v_{q_1+1}, \dots, v_{q_1+a-p+1}$ of $G_2$ (with the understanding that $v_i=v_{i-q}$ when $i > q$). Note that $a-p+1 \le q$ (so we do not join to the same vertex of $G_2$ more than once) and that $a-p+1 \ge (b+2)-p+1 \ge 7$. The latter means that all the edges which we attach will belong to $3$-cycles in the resulting graph $G'$. Moreover, the same cycle $C_q$, with the same labelling of the vertices still has the property that the degrees of its vertices counted relative to $G'$ go similar to those in a top monotone cycle: the difference of the degrees is no more than one, and there is a path in $C_q$ starting at $v_1$ and passing through all the vertices of the maximal degree. We repeat the same procedure another $(p-1)$ times, joining each of the remaining vertices of $G_1=K_p$ one-by-one to $a-p+1$ vertices of $G_2$ which are consecutive along $C_q$ and which starting from the first vertex whose degree in the graph so far constructed is smaller than that of $v_1$ (or from $v_1$ if all such degrees are equal). The resulting graph $G$ realises the sequences $d$. Indeed, by construction, all the vertices belonging to $G_1$ will have degree $a$ in $G$. Moreover, the degrees in $G$ of the vertices belonging to $G_2$ differ by no more than one (this remains true at each step of our construction), there are $q$ of them, and the sum of their degrees is $bq$, which implies that all their degrees are equal to $b$. Furthermore, at each step, all attached edges belong to $3$-cycles. Also, $G_2$ is triangular by construction. Every edge of $G_1$ belongs to a $3$-cycle lying in $G_1$, with the only exception, when $p=2$. But then from~\eqref{eq:noKb+1} we have $a+b \ge q+1$, and so from $a \ge b+2$ we obtain $q \le 2a-3 < 2(a-p+1)$. This means that the two sets of $a-p+1$ edges which we add to two vertices of $G_1=K_2$ are simultaneously incident to at least one vertex of $G_2$, which then gives a $3$-cycle containing the edge $G_1=K_2$.
\end{proof}

In view of Propositions~\ref{p:sb1},~\ref{p:pgeb} and~\ref{p:pltb}, to complete the proof of the Theorem, it remains to show that all the sequences in the class $\mathcal{G}_\alpha, \, \alpha = 1,2,3$, admit a triangular realisation, where $\mathcal{G}_\alpha$ is the class of all biregular graphical sequences $d=(a^p, b^q)$ such that $p,q > 0, \, a \ge b+2, \, b \ge 4$, and $b = p+\alpha$.

The proof goes similarly for all three classes.

Suppose a sequence $d=(a^p, b^q) \in \mathcal{G}_1$ has no triangular realisation, and its degree sum is the smallest from among all sequences in $\mathcal{G}_1$ having no triangular realisation. We have $a \le p+q-1$ by \eqref{eq:EG2} and $a \ge b+2=p+3$, which imply $q \ge 4$. Performing two consecutive steps of the Havel-Hakimi reduction on the vertices of the smallest degree we obtain the (still graphical) sequence $d'=((a-2)^p, b^{q-2})$. If $a-2 \ge b+2$, then $d' \in \mathcal{G}_1$, and hence $d'$ admits a triangular realisation $G'$ as its degree sum is smaller than that of $d$. If $a-2 \in \{b,b+1\}$, then $d'$ has a triangular realisation $G'$ by Proposition~\ref{p:sb1}. We now construct a triangular realisation for $d$ by adding two vertices to $G'$, joining them by an edge, and then joining $p$ vertices of degree $a-2$ of $G'$ to each of them. Hence all the sequences in $\mathcal{G}_1$ have a triangular realisation.

Suppose a sequence $d=(a^p, b^q) \in \mathcal{G}_2$ has no triangular realisation, and its degree sum is the smallest from among all sequences in $\mathcal{G}_2$ having no triangular realisation. We have $a \le p+q-1$ by \eqref{eq:EG2} and $a \ge b+2=p+4$, which imply $q \ge 5$. Let $d'=((a-3)^p, b^{q-3})$. Unlike in the previous case, graphicality of $d'$ does not follow immediately (although it is easy to see that its degree sum is even). We consider several cases. First assume that $a-3 \ge b$. Then (by~\eqref{eq:EG2} and as $b > p$), graphicality of $d'$ is equivalent to the inequality $a-3 \le p+(q-3)-1$ which is satisfied, as $d$ is graphical. Furthermore, the sequence $d'$ either lies in $\mathcal{G}_2$ (and hence admits a triangular realisation $G'$ as its degree sum is smaller than that of $d$) or has its terms differing by no more than $1$ (and hence admits a triangular realisation $G'$ by Proposition~\ref{p:sb1}). Now we take a disjoint union of $G'$ with $K_3$ and then join $p$ vertices of degree $a-3$ of $G'$ to each of the vertices of $K_3$. The resulting graph $G$ is a triangular realisation of the sequence $d$. Now suppose that $a-3 < b$. As $a \ge b+2$, we must have $a=b+2$, and so $d=((p+4)^p,(p+2)^q)$ (with $p \ge 2, \, q \ge 5$ and $p+pq$ being even) and $d'=((p+2)^{q-3},(p+1)^p)$. By~\eqref{eq:EG2}, the sequence $d'$ is graphical if and only if $q \ge 6$. But if $q \le 5$, any realisation of $d$ is triangular by Remark~\ref{r:2b}. We can therefore assume that $q \ge 6$. If $p \ge 3$, the sequence $d'$ admits a triangular realisation $G'$ by Proposition~\ref{p:sb1}. If $q\ge 6$ and $p=2$, we obtain $d=(6^2,4^q)$ and $d'=(4^{q-3},3^2)$. This sequence has a triangular realisation $G'$ obtained by removing the edge $(0,2)$ from the standard triangular realisation of the sequence $(4^{q-1})$ constructed as in Proposition~\ref{p:sb1}. In both these cases, we can proceed as above to construct a triangular realisation of $d$.

Finally, let $d=(a^p, b^q) \in \mathcal{G}_3$ be a sequence with no triangular realisation whose degree sum is the smallest from among all sequences in $\mathcal{G}_3$ having no triangular realisation. From $a \le p+q-1$ and $a \ge b+2=p+5$, we obtain $q \ge 6$. Define $d'=((a-4)^p, b^{q-4})$. If $a-4 \ge b$, this sequence is graphical by~\eqref{eq:EG2} and admits a triangular realisation $G'$ (either because it lies in $\mathcal{G}_3$ and has a smaller degree sum than $d$ or by Proposition~\ref{p:sb1}). If $a-4=b-1$, we obtain $d=((p+6)^p, (p+3)^q)$ and $d'=((p+3)^{q-4}, (p+2)^p)$. As $d$ is graphical, we have $q \ge 7$ by~\eqref{eq:EG2}, and then $q \ge 8$ (because the degree sum of $d$ is odd when $q=7$). Then $d'$ is also graphical. Moreover, $p=b-3 \ge 1$ and we cannot have $p=1$, as otherwise the degree sum of $d$ is again odd. Then $p \ge 2$ and so $d'$ has a triangular realisation $G'$ by Proposition~\ref{p:sb1}. If $a-4=b-2$, we obtain $d=((p+5)^p, (p+3)^q)$ and $d'=((p+3)^{q-4}, (p+1)^p)$. As $d$ is graphical, we have $q \ge 6$ by~\eqref{eq:EG2}. From~\eqref{eq:EG2} applied to $d'$ (note that this time we need both inequalities!) we obtain that $d'$ is graphical if and only if $q \ge 8$ and $p^2-p(q-5)+(q-4)(q-8) \ge 0$. The left-hand side is positive for $q \ge 9$, and when $q=8$, it can only be negative when $p \in \{1, 2\}$. This gives two sequences $d^{(1)}=(6^1,4^8)$ and $d^{(2)}=(7^2,5^8)$ for which $d'$ is not graphical. Another two cases when $d'$ is not graphical are $q \in \{6,7\}$. But by Remark~\ref{r:2b}, any realisation of $d$ is triangular when $q \le p+5$, so we only get $(p,q)=(1,7)$ which gives $d^{(3)}=(6^1,4^7)$. All three sequences $d^{(1)}, d^{(2)}, d^{(3)}$ admit triangular realisations (see Remark~\ref{r:except} below), which contradicts the choice of $d$. It follows that we can assume that the sequence $d'=((p+3)^{q-4}, (p+1)^p)$ is graphical (and so $q \ge 8$). If $p \in \{1,2\}$ we get $d=(6^1,4^q)$ or $d=(7^2,5^q)$ which have triangular realisations by Remark~\ref{r:except}. Finally, let $p \ge 3$. Then the sequence $d'$ is graphical, with all the terms being at least $4$. If $d' \in \mathcal{G}_3$, it must have a triangular realisation $G'$ because its degree sum is smaller than the degree sum of $d$, and if $d' \notin \mathcal{G}_3$, it must again have a triangular realisation $G'$ because all the other cases in the proof are already considered. We now construct a triangular realisation of the sequence $d$ by taking a disjoint union of the graph $G'$ (in any of the above cases) and $K_4$, and then joining each of the $p$ vertices of degree $a-4$ of $G'$ to each of the vertices of $K_4$.

\begin{remark} \label{r:except}
  There is a small number of exceptional sequences not covered by general constructions in Propositions~\ref{p:sb1}, \ref{p:pgeb} and \ref{p:pltb}. We construct triangular realisations of these sequences explicitly.

  Let $d=(6^1,4^q)$. For $d$ to be graphical we need $q \ge 6$. By Remark~\ref{r:noKb+1} we can assume $q \le 10$. Performing the Havel-Hakimi reduction on the vertex of degree $6$ we obtain the sequences $d'=(4^{q-6},3^6)$. We now construct a realisation $G'$ of the sequence $d'$, for $6 \le q \le 10$, and then add to it a vertex of degree $6$ which we join to all six vertices of $G'$ of degree $3$. For $q=6$, we take $G'=K_{3,3}$. For $q=10$, we take for $G'$ the disjoint union of the graph $K_6$ minus two disjoint $2$-paths and the graph $K_4$. The graphs $G'$ for $q \in \{7,8,9\}$ are shown in Figure~\ref{figure:43}.

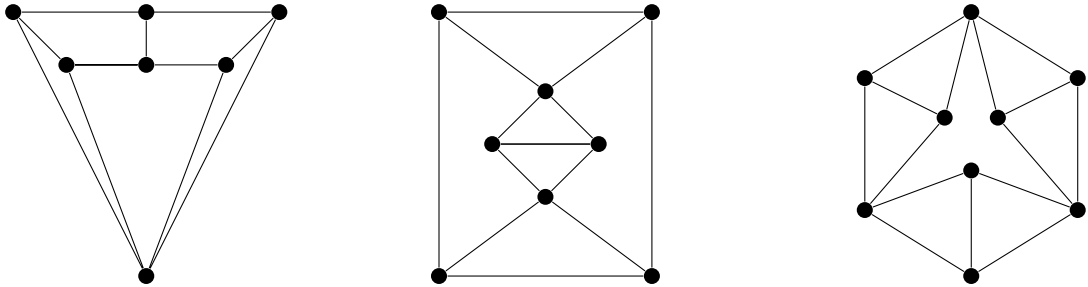
\begin{figure}[h]
\centering
\begin{tikzpicture}[-,scale=0.7]
  \tikzstyle{vertex}=[circle,fill=black,minimum size=6pt, inner sep=0pt]
  \def \x {2}
  \node[vertex] (G_1) at (0,0) {};
  \node[vertex] (G_2) at (5,0)   {};
  \node[vertex] (G_3) at (2.5,-5)  {};
  \node[vertex] (G_4) at (2.5,0) {};
  \node[vertex] (G_5) at (1,-1)   {};
  \node[vertex] (G_6) at (4,-1)  {};
  \node[vertex] (G_7) at (2.5,-1)  {};
  \draw (G_1) -- (G_2) -- (G_3) -- (G_1) -- (G_5) -- (G_7) -- (G_6) -- (G_3) -- (G_5) -- (G_7) -- (G_4);
  \draw (G_6) -- (G_2);
\node[vertex] (H_1) at (6+\x,0) {};
  \node[vertex] (H_2) at (10+\x,0)   {};
  \node[vertex] (H_3) at (10+\x,-5)  {};
  \node[vertex] (H_4) at (6+\x,-5) {};
  \node[vertex] (H_5) at (8+\x,-1.5) {};
  \node[vertex] (H_6) at (7+\x,-2.5)   {};
  \node[vertex] (H_7) at (9+\x,-2.5)  {};
  \node[vertex] (H_8) at (8+\x,-3.5) {};
  \draw (H_1) -- (H_2) -- (H_3) -- (H_4) -- (H_1) -- (H_5) -- (H_6) -- (H_7) -- (H_8) -- (H_3);
  \draw (H_4) -- (H_8) -- (H_6) -- (H_7) -- (H_5) -- (H_2);

\node[vertex] (I_1) at (14+2*\x,0) {};
  \node[vertex] (I_2) at (16+2*\x,-1.25)   {};
  \node[vertex] (I_3) at (16+2*\x,-3.75)  {};
  \node[vertex] (I_4) at (14+2*\x,-5) {};
  \node[vertex] (I_5) at (12+2*\x,-3.75) {};
  \node[vertex] (I_6) at (12+2*\x,-1.25) {};
  \node[vertex] (I_7) at (14+2*\x,-3) {};
  \node[vertex] (I_8) at (13.5+2*\x,-2) {};
  \node[vertex] (I_9) at (14.5+2*\x,-2)   {};
\draw (I_1) -- (I_2) -- (I_3) -- (I_4) -- (I_5) -- (I_6) -- (I_1);
\draw (I_3) -- (I_7) -- (I_5) -- (I_8) -- (I_1) -- (I_9) -- (I_3);
\draw (I_4) -- (I_7);
\draw (I_6) -- (I_8);
\draw (I_2) -- (I_9);
\end{tikzpicture}
\caption{Graphs $G'$ with degree sequences $d'=(4,3^6),\, (4^2,3^6),\, (4^3,3^6)$.}
\label{figure:43}
\end{figure}

  Let $d=(7^2,5^q)$. For $d$ to be graphical we need $q$ to be even and $q \ge 6$. By Remarks~\ref{r:noKb+1} and~\ref{r:2b} we can assume $8 \le q \le 11$, and so $q \in \{8,10\}$. If $q=8$, we start with the standard realisation $G'$ of the sequence $(4^9)$, with the set of vertices $\mathbb{Z}_9$ and with two vertices $i, j \in \mathbb{Z}_9$ joined by an edge if and only if $i-j \in \{\pm1, \pm 2\}$. Then we add to $G'$ two edges $(0, \pm 3)$, and an extra vertex (of degree $7$) which we join to all the vertices of $\mathbb{Z}_9$, except for the vertices $\pm 3$. It is easy to see that the resulting graph is a triangular realisation of $d$. Similarly, for $q=10$, we start with the circulant graph on the vertex set $\mathbb{Z}_{11}$ whose vertices $i, j \in \mathbb{Z}_{11}$ are joined by an edge if and only if $i-j \in \{\pm1, \pm 2\}$, add to it three edges $(0, \pm 3)$ and $(4,7)$, and then add an extra vertex of degree $7$ which we join to all the vertices of $\mathbb{Z}_{11}$, except for the vertices $\pm 3, 4$ and $7$. An easy inspection shows that the resulting graph is a triangular realisation of $d$.
\end{remark}

This completes the proof of the Theorem.

\medskip

For possible directions of solving the Conjecture, we note that reversing the Havel-Hakimi reduction (on either the first, or the second, or the last term of the sequence), ``in almost all the cases" preserves the following two simultaneous properties of the realisation: to be triangular and to contain a monotone Hamiltonian path (that is, a path passing through all the vertices along which the degrees do not increase). A modification of this argument may lead to an inductive proof; unfortunately, we were not able to find one.

\end{document}